\newtheorem{theo}{Theorem}[section]
\newtheorem{prop}{Proposition}[section]
\newtheorem{lem}{Lemma}[section]
\newcommand{\dx}{\:\text{\textnormal{d}}}
\newcommand{\R}{\mathbb{R}}
\newcommand{\N}{\mathbb{N}}
\newcommand{\Q}{\mathbb{Q}}
\newcommand{\eps}{\varepsilon}
\newcommand{\ve}{\varepsilon}
\newcommand{\pical}{\mathcal{P}}
\newcommand{\deb}{\rightharpoonup}
\newcommand{\cvge}[1]{\xrightarrow[#1]{}}
\title{A sharp inequality for transport maps in $W^{1,p}(\R)$ \\ via approximation}
\author{Jean Louet\thanks{Département de Mathématiques, Bât. 425,
    Faculté des Sciences, Université Paris-Sud 11, F-91405 Orsay
    cedex, France (\texttt{jean.louet@math.u-psud.fr}, \texttt{filippo.santambrogio@math.u-psud.fr})}, Filippo Santambrogio\footnotemark[1]}
\date{\today}
\begin{document}
\maketitle

\paragraph{Abstract.} For $f$ convex and increasing, we prove the inequality $ \int f(|U'|)  \geq \int f(nT')$, every time that $U$ is a Sobolev function of one variable and $T$ is the non-decreasing map defined on the same interval with the same image measure as $U$, and the function $n(x)$ takes into account the number of pre-images of $U$ at each point. This may be applied to some variational problems in a mass-transport framework or under volume constraints.

\paragraph{Keywords.} Semi-continuity; Monotone transport; Calculus of
variations; Volume constraints; Coarea formula

\paragraph{AMS Subject Classification.} 49J45

\section{Introduction}

This short paper starts from the following easy question: among maps $U:\Omega\to\Omega'$ with prescribed image measure $\nu$, which is the one with the smallest $H^1$ norm? 

This kind of questions could arise from optimal transport, when two measures $\mu\in\pical(\Omega)$ and $\nu\in\pical(\Omega')$ are fixed, and, instead of considering only costs depending on $(x,U(x))$, we also look at higher-order terms, involving $DU(x)$. It can also arise in incompressible elasticity, where the minimization of the stress tensor (quadratic in $DU$) is standard, and the incompressibility could be expressed through constraints on the image measure rather than by a determinant condition (which is actually equivalent for regular and injective maps). Moreover, in calculus of variations, energy-minimization problems under ``volume constraints'' have already been studied (see \cite{a-a-c,AmbFonMarTar}), and \cite{bu-ri} pointed out that this may be interpreted as a constraint on $\nu$.

If one looks at the $H^1$ norm, it is easy to see that the $\int |U|^2$ part of such a norm does not play any role, since its value is $\int |y|^2d\nu$ and is fixed by the constraint. Hence, we only want to minimize the $L^2$ norm of the derivative part. The easiest case is the 1D one, where we can compare $U$ to any injective function $T$ with the same image, and impose equality of the image measure densities, thus obtaining 
\begin{equation}\label{T'U'}
\frac{1}{|T'(T^{-1}(x))|}=\sum_{y\in U^{-1}(x)}\frac{1}{|U'(y)|}.
\end{equation} 
This shows that the values of $|T'|$ are globally smaller than those of $|U'|$, thus suggesting that the $H^1$ norm of $U$ is larger than that of $T$. If one only uses the pointwise inequality $|U'(y)|\geq |T'(T^{-1}(U(y)))|$, a first proof would give $\int |U'|^2\geq \int n|T'|^2$, where $n$ is a term taking into account the number of points with the same image through $U$ (we will enter into details below). Yet, convexity yields a sharper inequality, namely $\int |U'|^2\geq \int n^2|T'|^2$. This can be generalized to other powers of the derivative, thus getting  $\int |U'|^p\geq \int n^p|T'|^p$.  It can also be furtherly generalized to $\int f(|U'|)\geq \int f(n|T'|)$ for any convex and increasing function $f:\R_+\to\R_+$. 

Similar inequalities could be obtained in higher dimensions, letting the determinant of the Jacobian appear, since the condition \eqref{T'U'} becomes $1/|\det(DT(T^{-1}(x)))|=\sum_{y\in U^{-1}(x)}1/|\det(DU(y))|.$ In such a case one can obtain an inequality like $\int f(|\det DU|)\geq \int f(n(x)|\det DT|)$, where the functional on the gradient part is a polyconvex one (see \cite{polyconvex} for the very first paper on the subject). Yet, there are several difficulties to get this result in higher dimensions, even if one chooses $T=\nabla \phi$ with $\phi$ convex (Brenier's transport \cite{brenier}) to replace increasing map we took in 1D. Actually, how these inequalities are obtained? The first step is to use a change-of-variable technique, the area or co-area formula (since we are in a case where the dimensions of the definition and target spaces agree), and get an equality. Then, condition \eqref{T'U'} (or its multidimensional counterpart with determinants), together with the convexity of $f$, gives the inequality we want. Thus, the coarea formula allows to get the thesis quite rapidly (even if we think that the trick of the convexity inequality on $f$ has not been noticed so far), but an assumption is needed: we actually need to use \eqref{T'U'} and this requires at least $T'\neq 0$. This assumption prevents singular parts in the measure $\nu$ and, since we want to give a general statement for any $\nu$, we then go on by approximation. It is what we do in 1D in this paper, and it seems much more difficult to handle the same strategy in higher dimension. Since we mentioned the arbitrariness of the target measure $\nu$, let us spend few words on the starting measure as well (that we called $\mu$): actually, in this paper we only consider the spatially invariant case, i.e. $\mu$ equal to the Lebesgue measure $\lambda$ and the functional $\int f(|U'|)$ with no explicit dependence on $x$. Actually, it would be interesting to look at arbitrary $\mu$ for optimal transport purposes, but some counter-examples exist when $\mu$ has a non-constant density. We will not enter into details on it here.

On the contrary, let us discuss a while the applications of this inequality and the need for a sharp version. One of the first possibility is to directly apply this inequality in a minimization problem for functionals like $\int f(|U'|)d\mu$, and it proves that the monotone $T$ is optimal. As we said, this is not a general obvious fact and is false when $\mu\neq\lambda$ (the reason being that \eqref{T'U'} would make the values of the density at different points appear). This also implies the optimality of $T$ when the cost is of the form $\int  \left[f(|U'|)+h(|U(x)-x|)\right]d\mu,$ where $h$ is convex as well, since $T$ is known (from the optimal transport theory) to minimize the second part as well. Yet, for general Lagrangian cost functions $\int L(x,U(x),U'(x))dx$ the situation is trickier, and in some of these cases we exactly expect that the sharp version of the inequality could help proving the injectivity of the optimal $U$. Actually, it is possible to prove that every continuous non-injective function $U$ of one variable has a maximal non-injectivity interval $I$ where every image has at least $n\geq 2$ points in its pre-image, and where it is possible to replace $U$ with a ``local'' version of $T$. In the $H^1$ case (i.e. when $L$ contains a quadratic part in $U'$), our estimate implies that this term in the energy is not only larger for $U$ than for $T$, but it also quantifies the gain. Actually, replacing $U$ with $T$ decreases the energy of at least $3\int_I |U'|^2$ (since $n^2\geq 4$), and this could allow to compensate what is lost in the non-gradient part thanks to a Poincaré-type inequality.

\section{Basis for the inequality }

\paragraph{Notations and statement}
We first precise the inequality that we want to prove.
Let $I=[a,b]$ be a segment of $\R$, take $U$, $T \in W^{1,1}(I)$ such that $T_\# \lambda = U_\# \lambda = \nu$, where $\lambda$ is the Lebesgue measure, and choose $T$ non-decreasing. Actually, for any measure $\nu$, there exists a unique non-decreasing map $T$ such that $T_\#\lambda=\nu$, thanks, by the way, to standard results in optimal transportation (see \cite{brenier}); the fact that $T\in W^{1,1}$ is in this section an assumption: in general it depends on lower bounds on the density of $\nu$. 

Let $f : \R^+ \to \R^+$ be a convex and non-decreasing function, such that 
$f \circ |U'|$ is integrable. Our goal is to prove that, if we note, for $x \in I$,
$$ n(x) = \# U^{-1}(T(x)) $$
the number of points of $I$ having $T(x)$ as image by $U$, then $f(n(x)T'(x))$ is integrable and the following inequality holds:
\begin{equation}
 \int_I f(|U'(x)|) \dx x \geq \int_I f(T'(x)n(x)) \dx x \label{ineq}
  \end{equation}
(we use the convention that, in case $n(x)=\infty$ and $T'(x)=0$, we consider $T'(x)n(x)=0$).

\paragraph{Coarea and convexity}
We first give a sketchy idea via the coarea formula. This formula gives
$$\int_a^b g(x)|U'(x)| \dx x = \int_\R \dx y \left(\sum_{x\in U^{-1}(y)}g(x)\right),$$
for every measurable function $g$ (here $U$ is usually supposed to be Lipschitz, but a slightly different but equivalent version exists for $U\in BV$, and thus for $U \in W^{1,1}$, see \cite{e-g}). Let us take $g=f(|U'|)/|U'|$ (we don't really need to deal with the case where $U'$ can vanish, even if this would be easy at least when $f(0)=f'(0)=0$). This gives 
$$\int_a^b f(|U'(x)|) \dx x = \int_\R \dx y \left(\sum_{x\in U^{-1}(y)}f(|U'(x)|)\frac{1}{|U'(x)|}\right).$$
We now use the condition \eqref{T'U'} (we don't precise the assumptions to guarantee its validity and we assume $T$ to be bijective), which gives 
$$\sum_{x\in U^{-1}(y)}\frac{T'(T^{-1}(y))}{|U'(x)|}=1.$$
This, together with the convexity of $f$, yields
$$\int_a^b f(|U'(x)|) \dx x \!\geq \!\int_\R \frac{\dx y}{T'(T^{-1}(y))} f\left(\sum_{x\in U^{-1}(y)}\!\! |U'(x)|\frac{T'(T^{-1}(y))}{|U'(x)|}\right)\!=\!\int_\R \frac{\dx y}{T'(T^{-1}(y))}\!  f\left(T(T^{-1}(y))n(T^{-1}(y))\right),$$
which gives the thesis after a change of variable $x=T^{-1}(y)$.

\paragraph{The piecewise monotone case}
In this paragraph, we make the following assumption: there exists a subdivision $a = x_1 < ... < x_l = b$ of the interval $I$ such that, on each segment $[x_i,x_{i+1}]$, $U$ is of class $C^1$ and its derivative doesn't vanish. This implies that $\nu=U_\#\lambda$ has a density which is piecewise $C^0$ and bounded from below. In particular, the corresponding map $T$ is also piecewise $C^1$ and $T'$ is bounded from below. In such a case we can give a complete proof, with no need to evoke the coarea formula.
\begin{prop}
Under above assumption, inequality \eqref{ineq} is true.
\end{prop}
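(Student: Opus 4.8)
The plan is to prove inequality~\eqref{ineq} directly under the piecewise monotonicity assumption, avoiding the coarea formula and instead working with the explicit subdivision and the change-of-variables formula on each monotone piece. First I would use the subdivision $a = x_1 < \dots < x_l = b$ to write $U$ as a union of $l-1$ pieces, on each of which $U$ is $C^1$ and strictly monotone (since $U'$ does not vanish and is continuous on each closed subinterval). On each piece $[x_i,x_{i+1}]$, $U$ is a $C^1$ diffeomorphism onto its image $J_i := U([x_i,x_{i+1}])$, so I may change variables $y = U(x)$ and write $\int_{x_i}^{x_{i+1}} f(|U'(x)|)\dx x = \int_{J_i} f(|U'(U_i^{-1}(y))|)\,|(U_i^{-1})'(y)|\dx y$, where $U_i$ denotes the restriction of $U$ to the $i$-th piece. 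Summing over $i$ converts the left-hand side of~\eqref{ineq} into an integral over the target line in the $y$ variable, where the summation over the (finitely many, by the finiteness of the subdivision) preimages of each $y$ appears naturally.

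The key algebraic step is then to recognize the identity~\eqref{T'U'} in the finite setting: for almost every $y$ in the image, the preimage $U^{-1}(y)$ consists of finitely many points $x$, one in each piece whose image contains $y$, and the number $n(x) = \#U^{-1}(T(x))$ records how many such points there are. Since $T$ is the monotone rearrangement with $T_\#\lambda = U_\#\lambda = \nu$ and is itself piecewise $C^1$ with $T'$ bounded below, $T$ is a bijection onto its image and the density-matching condition gives exactly $\sum_{x \in U^{-1}(y)} 1/|U'(x)| = 1/T'(T^{-1}(y))$. I would establish this rigorously from the equality of image measures $U_\#\lambda = T_\#\lambda$ by testing against arbitrary continuous functions and using the change of variables on each monotone piece, which yields $\sum_{x \in U^{-1}(y)} 1/|U'(x)| = 1/T'(T^{-1}(y))$ for a.e.\ $y$; note $n(T^{-1}(y)) = \#U^{-1}(y)$ is constant on each maximal interval where the preimage count is fixed.

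With this identity in hand, the convexity step is clean: fixing $y$ and writing $\theta := T'(T^{-1}(y))$, I have $\sum_{x \in U^{-1}(y)} \theta/|U'(x)| = 1$, so the weights $w_x := \theta/|U'(x)|$ form a convex combination. Applying Jensen's inequality to $f$ with these weights and the points $|U'(x)|$ gives $\sum_x w_x f(|U'(x)|) \geq f\bigl(\sum_x w_x |U'(x)|\bigr) = f\bigl(\theta\, n(T^{-1}(y))\bigr)$, since $\sum_x w_x|U'(x)| = \theta \cdot \#U^{-1}(y) = \theta\, n(T^{-1}(y))$. Integrating in $y$ and then changing variables back via $y = T(x)$ (legitimate since $T$ is a piecewise $C^1$ bijection with $T' > 0$) recovers $\int_I f(T'(x) n(x))\dx x$ on the right and $\int_I f(|U'(x)|)\dx x$ on the left, which is precisely~\eqref{ineq}.

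I expect the main obstacle to be the careful bookkeeping of the preimage structure, namely verifying that for a.e.\ $y$ the set $U^{-1}(y)$ is finite with exactly one point in each monotone piece covering $y$, that $n(x)$ is indeed constant and equal to this count on the $T$-image of each relevant region, and that the subdivision of the target line induced by the endpoints $\{U(x_i)\}$ (where the cardinality of the preimage can jump) is a finite set and hence negligible. These boundary effects, together with ensuring the measurability and integrability needed to integrate the pointwise Jensen inequality, are the routine but delicate parts; the analytic core — change of variables, the density identity, and Jensen — is straightforward once the monotone pieces are isolated.
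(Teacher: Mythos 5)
Your proposal is correct and follows essentially the same route as the paper: refine the subdivision into monotone pieces, prove the density identity \eqref{T'U'} from $U_\#\lambda = T_\#\lambda$ by a change of variables on each piece, and apply Jensen's inequality with the weights $T'(T^{-1}(y))/|U'(x)|$ summing to one. The only cosmetic difference is that you pass through the target variable $y$ and change back via $y = T(x)$ at the end, whereas the paper composes the two substitutions into a single change of variables $x = u_i(y)$ with $u_i = \varphi_i \circ T$ landing directly in the domain of $T$.
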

\begin{proof}
We denote by $y_1 < ... < y_j$ ($j \leq l$) the points of the set $U(\{x_1,...,x_l\})$ and, adding some elements to the set $\{x_1,...,x_l\}$, we can assume that this set coincides with $U^{-1}(\{y_1,...,y_j\})$. Then:
\begin{itemize}
\item for any $1 \leq i \leq l-1$, there exists $1 \leq k \leq j-1$ such that $U : \:]x_i,x_{i+1}[ \: \to \:]y_k,y_{k+1}[$ is bijective, strictly monotone. We denote by $\varphi_i$ its inverse function, and $u_i = \varphi_i \circ T$;
\item for each $1 \leq k \leq j-1$, let us denote by $A_k$ the set of indexes $i$ such that $U(]x_i,x_{i+1}[)$ is precisely the interval $]y_k,y_{k+1}[$, and $n_k= \#A_k$, and notice that the function $y \mapsto \#U^{-1}(\{y\})$ has $n_k$ for constant value on $]y_k,y_{k+1}[$; 
\item the equality \eqref{T'U'} is true almost everywhere, and more precisely :
\end{itemize}

\begin{lem}\label{T'vrai}
Let us denote by $z_k=T^{-1}(y_k)$, $1 \leq k \leq j$. Then, for $x$ in $I \backslash \{z_1,...,z_j\}$, we have
\begin{equation}\label{commonvalue}
 \frac{1}{T'(x)} = \sum_{i \in A_k} \frac{1}{|U'(u_i(x))|} 
 \end{equation}
where $k$ is the index such that $z_k < x < z_{k+1}$. The common value in \eqref{commonvalue} also coincides with the density of the measure $\nu$ at the point $T(x)$ (a density which is thus piecewise continuous on $[c,d]$).\end{lem}

\begin{proof} Let us take $y_k < c < d < y_{k+1}$ and compute the value $\nu([c,d])$. Firstly, we have:

$$ \nu([c,d]) = \lambda(T^{-1}([c,d])) = \int_{T^{-1}(c) < x < T^{-1}(d)} \dx x   = \int_c^d \frac{\dx y}{T'(T^{-1}(y))} $$
where the last equality is obtained by changing of variables $T(x)=y$. On the other hand, $\nu([c,d]) = \lambda(U^{-1}([c,d])$ with $U^{-1}([c,d]) = \bigcup\limits_{i \in A_k} \varphi_i([c,d]) $ and for $i \in A_k$ :
$$\lambda(\varphi_i[c,d]) = ±\int_{\varphi_i(c)}^{\varphi_i(d)} \dx x = \int_c^d \frac{\dx y}{|U'(\phi_i(y))|} =  \int_c^d \frac{\dx y}{|U'(u_i(T^{-1}(y)))|} ;$$
this proves the lemma. $\qedhere$ \end{proof}

Thus, we have the following equalities :
$$\int_a^b f(|U'(x)|) \dx x = \sum\limits_{i=1}^{l-1} \int_{x_i}^{x_{i+1}} f(|U'(x)|) \dx x = \sum\limits_{k=1}^{j-1} \left( \sum\limits_{i \in A_k} \int_{x_i}^{x_{i+1}} f(|U'(x)|) \dx x \right),$$
where the sum over $i$ has $n_k$ terms. On the interval $]x_i,x_{i+1}[$, we set $x = u_i(y)$, and since the derivative of $u_i$ is $ u_i' = T'/(U'\!\circ\! u_i)$, we obtain

\begin{equation} \int_a^b f(|U'|) = \sum\limits_{k=1}^{j-1} \sum\limits_{i \in A_k} \left( \int_{z_k}^{z_{k+1}}  f(|U'(u_i(y))|) \frac{T'(y)}{|U'(u_i(y))|} \dx y \right). \label{eq1} \end{equation}

In the equation \eqref{eq1}, Lemma \ref{T'vrai} gives $ \sum\limits_{i \in A_k} \frac{T'(y)}{|U'(u_i(y))|} = 1  $ and, since $f$ is convex:
$$ \int_a^b f(|U'|) \geq \sum\limits_{k=1}^{j-1} \int_{z_k}^{z_{k+1}} f \left( \sum\limits_{i \in A_k} |U'(u_i(y))| \frac{T'(y)}{|U'(u_i(y))|} \right) \dx y; $$
in the second sum, the terms $|U'(u_i(y))|$ disappear and it only remains $T'(y)$ which occurs $n_k$ times; since $n_k$ is exactly the constant value of $n$ on $]z_k,z_{k+1}[$, we obtain the integral on this interval of $n(y)T'(y)$; and the first sum gives the integral of this function on the full interval $I$, \textit{i.e.}
$$ \int_I f(|U'(y)|) \dx y \geq \int_I f(n(y)T'(y)) \dx y.\qedhere$$
\end{proof}

\section{Approximation}

To handle the general case, we fix a superlinear non-decreasing function $f$ and we first show the following:

\begin{lem} For every $U\in W^{1,1}(I)$ with $\int f(|U'|)<+\infty$ there exists a sequence $(U_k)_{k \in \N}$ in $W^{1,1}(I)$ such that :
\begin{itemize}
\item $U_k \cvge{k} U$ in $W^{1,1}(I)$ ;
\item $f \circ |U_k'| \cvge{k} f \circ |U'|$ in $L^1(I)$ ;
\item for each $k$, $U_k$ is piecewise affine with $U'_k \neq 0$ a.e..
\end{itemize}
\end{lem}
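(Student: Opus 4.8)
The plan is to reduce everything to an approximation statement for the derivative $g := U' \in L^1(I)$. Indeed, if I can produce piecewise constant functions $g_k$ that are nonzero on each piece and satisfy $g_k \to g$ in $L^1(I)$ together with $f(|g_k|) \to f(|g|)$ in $L^1(I)$, then it suffices to set $U_k(x) := U(a) + \int_a^x g_k(t)\dx t$. Such a $U_k$ is continuous and affine on each interval of the partition (with slope $g_k \neq 0$), hence piecewise affine with $U_k' = g_k \neq 0$ a.e.; moreover $U_k' \to U'$ in $L^1$ is exactly $g_k \to g$ in $L^1$, while $|U_k(x)-U(x)| = |\int_a^x (g_k-g)| \le \|g_k - g\|_{L^1(I)}$ gives $U_k \to U$ uniformly, hence in $L^1$. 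Thus $U_k \to U$ in $W^{1,1}(I)$ and $f\circ|U_k'| \to f\circ|U'|$ in $L^1$, which is the full statement.

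For the construction of $g_k$, first I would fix a refining sequence of partitions of $I$ into subintervals (say dyadic ones) with mesh tending to $0$, and let $g_k$ be the average of $g$ over each interval of the $k$-th partition, i.e. the conditional expectation $g_k = \mathbb{E}[g\mid\mathcal{F}_k]$ with respect to the $\sigma$-algebra $\mathcal{F}_k$ generated by that partition. These $g_k$ are piecewise constant by construction, and by the martingale (or Lebesgue differentiation) theorem they converge to $g$ both a.e. and in $L^1(I)$, since $g \in L^1$ and the $\mathcal{F}_k$ generate the Borel $\sigma$-algebra. This already yields the first required convergence.

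The hard part will be the second convergence $f(|g_k|) \to f(|g|)$ in $L^1$: because $f$ is superlinear it is \emph{not} uniformly continuous, so $L^1$-closeness of the derivatives is by itself far too weak to control $f$ of them. This is where the convexity of $f$ enters decisively. By the conditional Jensen inequality, $f(|g_k|) = f(|\mathbb{E}[g\mid\mathcal{F}_k]|) \le \mathbb{E}[f(|g|)\mid\mathcal{F}_k] =: h_k$, and since $f(|g|)\in L^1$ the dominating sequence $h_k$ is itself a martingale converging to $f(|g|)$ a.e. and in $L^1$. On the other hand $f(|g_k|) \to f(|g|)$ a.e. by continuity of $f$ and the a.e. convergence $g_k \to g$. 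Thus I have a nonnegative sequence converging a.e. to its limit and dominated by a sequence converging in $L^1$ to the same limit; a generalized dominated convergence (Pratt) argument then gives $\int f(|g_k|)\to\int f(|g|)$, and Scheffe's lemma upgrades this to $L^1$ convergence. Alternatively, one can first truncate $g$ at height $M$, run a plain dominated convergence with the constant bound $f(M)$ on the bounded level, and let $M\to\infty$ while diagonalizing; the truncation step is controlled using only that $f$ is nondecreasing and $f(|g|)\in L^1$.

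It remains to guarantee $g_k \neq 0$ a.e. The function $g_k$ can vanish only on the finitely many partition intervals where its constant value is $0$; on each such interval I would replace the value $0$ by a small nonzero constant $\delta_k$ with $\delta_k \to 0$. Since there are finitely many intervals of total length at most $|I|$, this perturbs $\|g_k\|_{L^1}$ by at most $\delta_k |I| \to 0$, and by continuity of $f$ it perturbs $\|f(|g_k|)\|_{L^1}$ by at most $|I|\,|f(\delta_k)-f(0)| \to 0$; hence both convergences are preserved and now $U_k' = g_k \neq 0$ a.e. The only genuinely delicate point in the whole argument is the $f\circ|U_k'|$ convergence of the previous paragraph; everything else is a routine consequence of the averaging construction and dominated convergence.
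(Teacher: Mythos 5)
Your proof is correct, but it follows a genuinely different route from the paper's. The paper works on the level of $f\circ|U'|$: it first reduces to the case where $f'$ is bounded below by a positive constant (replacing $f$ by $f+\mathrm{id}$), so that $f$ is invertible with Lipschitz inverse, then approximates $f\circ|U'|$ by positive piecewise constant functions $h_k$ and defines $|U_k'|=f^{-1}(h_k)$, recovering the sign of $U_k'$ separately from the local increments $U(x_k^+)-U(x_k^-)$; the convergence $f\circ|U_k'|\to f\circ|U'|$ is then automatic by construction, and the Lipschitz bound on $f^{-1}$ transfers it to $|U_k'|\to|U'|$. You instead approximate $U'$ directly by its dyadic conditional expectations (equivalently, $U_k$ is the piecewise linear interpolant of $U$ at dyadic nodes) and obtain the delicate convergence $f(|U_k'|)\to f(|U'|)$ from conditional Jensen, the $L^1$ martingale convergence of $\mathbb{E}[f(|U'|)\mid\mathcal{F}_k]$, and Pratt/Scheff\'e. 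Each approach buys something: yours dispenses with the invertibility reduction and with the separate sign argument (the averages carry the sign automatically), and is arguably more transparent; the paper's proof, on the other hand, never uses the convexity of $f$ in this lemma --- only monotonicity --- whereas your Jensen step needs convexity in an essential way. Since $f$ is convex throughout the paper this costs nothing here, but you should state explicitly that you are invoking that standing hypothesis, as the lemma as written only mentions that $f$ is superlinear and non-decreasing. All the individual steps you give (conditional Jensen applied to the convex function $x\mapsto f(|x|)$, continuity of $f$ at $0$ for a non-decreasing convex $f$, the generalized dominated convergence, and the $\delta_k$-perturbation of the vanishing pieces) check out.
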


\begin{proof} First notice that if the the thesis is true when replacing $f$ with $x \mapsto f(x)+x$, then it stays true for the original function $f$. This allows to assume that $f'$ is bounded from below by a positive constant; then $f : \R^+ \to \R^+$ is bijective and its inverse function is Lipschitz. Let $(h_k)_k$ be a sequence of positive and piecewise constant functions (say, on dyadic intervals of length $(b-a)/2^k$), such that $h_k \to f \circ |U'|$ in $L^1(I)$. We define $U_k'$ by
$$ U'_k(x) = \text{sgn}(U(x^+_k)-U(x^-_k)) f^{-1}(h_k(x)) $$
where $x_k^+$, $x_k^-$ are the dyadic numbers around $x$ (this is a non-ambiguous definition for a.e. $x \in I$).
\par We have $f \circ |U_k'| \to f \circ |U'|$ in $L^1(I)$ and we want to prove $U_k' \to U'$. First, notice that, since $f^{-1}$ is Lipschitz, we easily get $|U_k'| \to |U'|$ in $L^1$. Moreover, up to subsequences the convergence also holds a.e. on $I$. Thus, it is enough to manage the sign and prove that $U_k' \to U'$ a.e. on $I$. This convergence holds on any non-dyadic point where $U$ is differentiable with $U'\neq 0$ (which imposes the sign of  $U(x^+_k)-U(x^-_k)$). These points, together with those where $|U'|=U'=0$ cover almost all the interval ; this gives $U'_k \to U'$ a.e. on $I$ and, by dominated convergence, we obtain $U_k' \to U'$ in $L^1$.
\par Then, if we take for $U_k$ the primitive of $U_k'$ which has the same value of $U$ at $a$, we obtain a sequence $(U_k)_k$ of piecewise affine functions such that, by construction, $U_k \to U$ in $W^{1,1}(I)$ and $f \circ |U'_k| \to f\circ |U'|$ in $L^1(I)$. 
\end{proof}

In particular, each $U_k$ verifies the condition of the first section, thus the inequality \eqref{ineq} is true with $U_k$, $T_k$ (the non-decreasing function with same image measure) and $n_k = \#U_k^{-1} \circ T_k$.

\paragraph{Remark.} Thanks to the inequality in the piecewise affine case and since $f$ is non-decreasing and superlinear, $n_k \geq 1$ and $\int f(|U'_k|)$ has a limit, we infer that the sequence $(f(T'_k))_k$ is bounded in $L^1(I)$, with $f$ superlinear; thus $(T'_k)_k$ is an equi-integrable family, which implies:
\begin{itemize}
\item the sequence $(T_k)_k$ is equi-continuous, thus it admits, up to subsequences, a uniform limit $T$; this limit is obviously a non-decreasing function;
\item from the strong convergence $U_k\to U$ in $W^{1,1}$ we infer a.e. pointwise convergence, which implies that $(U_k)_\#\lambda\deb U_\#\lambda$; analogously, we have $(T_k)_\#\lambda\deb T_\#\lambda$, which implies $U_\#\lambda=T_\#\lambda$. Hence, the function $T$ is exactly the monotone function corresponding to the original function $U$;
\item the sequence $(T'_k)_k$ is weakly relatively compact in $L^1$, which, together with the uniform convergence $T_k\to T$, gives $T\in W^{1,1}(I)$ and $T_k \deb T$ in $W^{1,1}(I)$.
\end{itemize}

\paragraph{Asymptotics of $n_k(x)$ as $k\to \infty$}
To look at the limits of $n_k$, let us define the function $m$ given by
$$ m(x) = \inf \{ \liminf\limits_{k \to +\infty} n_k(x_k) \;:\; x_k \to x\}.$$
This function is actually the $\Gamma$-$\liminf$ of the functions $n_k$ (see \cite{braides}). A general result on $\Gamma$-$\liminf$ functions gives that $m$ is lower semicontinuous on $I$ (it is easy to check it via a sort of diagonal sequence).
\smallskip

We are interested in the following.
\begin{lem} \label{nk} For almost every $x \in I$ such that $T'(x)\neq 0$, we have
$m(x)\geq n(x)$.
\end{lem}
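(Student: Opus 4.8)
The plan is to reduce the inequality $m(x)\ge n(x)$ to a topological statement about transversal crossings of the level sets of $U$, and then to control, by a measure-theoretic argument, the exceptional levels where that statement fails.

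First I would exploit that in one dimension convergence in $W^{1,1}(I)$ is uniform: since $U_k(a)=U(a)$ and $U_k'\to U'$ in $L^1$, we have $\|U_k-U\|_\infty\le\|U_k'-U'\|_{L^1}\to 0$, while the Remark already provides $T_k\to T$ uniformly with $T$ continuous. Hence, for any sequence $x_k\to x$, the levels $y_k:=T_k(x_k)$ converge to $y:=T(x)$. Since $m$ is the $\Gamma$-$\liminf$ of the $n_k$, proving $m(x)\ge n(x)$ amounts to showing that \emph{every} sequence $x_k\to x$ satisfies $\liminf_k n_k(x_k)\ge n(x)$. I would therefore aim to produce, for $k$ large, at least $n(x)$ distinct solutions of $U_k(\cdot)=y_k$.

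The heart of the matter is a bracketing argument, valid as soon as $y=T(x)$ is such that each preimage in $U^{-1}(y)$ is a point of transversal crossing. Fixing any finite $N\le n(x)$ and distinct crossing points $p_1<\dots<p_N$ in $U^{-1}(y)$, I would choose pairwise disjoint intervals $(s_i,t_i)\ni p_i$ on whose endpoints $U$ takes values strictly on both sides of $y$, say $U(s_i)<y-\delta$ and $U(t_i)>y+\delta$ for some $\delta>0$ (symmetrically for decreasing crossings). For $k$ large enough that $\|U_k-U\|_\infty<\delta/2$ and $|y_k-y|<\delta/2$, the continuous function $U_k-y_k$ changes sign on each $(s_i,t_i)$, so by the intermediate value theorem it vanishes there; disjointness then yields $n_k(x_k)\ge N$. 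Letting $N=n(x)$, or $N\to\infty$ when $n(x)=\infty$, gives $\liminf_k n_k(x_k)\ge n(x)$, hence $m(x)\ge n(x)$ at every such ``good'' $x$.

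It remains to check that almost every $x$ with $T'(x)\ne 0$ is good, and this is the step I expect to be the main obstacle. I would set $Z=\{U'=0\}\cup\{U\text{ not differentiable}\}$ and argue that $\lambda(U(Z))=0$: indeed $U$, being absolutely continuous, maps Lebesgue-null sets to null sets, and $U(\{U'=0\})$ is null by the classical zero-derivative image lemma. Any preimage $p\in U^{-1}(y)$ at which $U$ is differentiable with $U'(p)\ne 0$ is automatically a transversal crossing, so for every level $y\notin U(Z)$ all preimages are crossings and the previous paragraph applies. Finally I would transfer this null set of bad levels to the variable $x$: writing $B=\{x:\,T(x)\in U(Z),\ T'(x)\ne 0\}$, so that $T(B)\subseteq U(Z)$, the area formula for the monotone absolutely continuous map $T$, which is essentially injective on $\{T'\ne0\}$, gives $\int_B T'(x)\dx x=\lambda(T(B))\le\lambda(U(Z))=0$; since $T'>0$ on $B$, this forces $\lambda(B)=0$, which is exactly the assertion. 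The two points I would treat most carefully are the zero-derivative image lemma for a merely absolutely continuous $U$ and the injectivity-up-to-null-sets of $T$ on $\{T'\ne0\}$ needed to justify the area-formula equality through the possibly degenerate monotone map.
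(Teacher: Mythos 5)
Your proposal is correct, and its core mechanism --- bracketing each preimage of $y=T(x)$ between points where $U$ lies strictly below and strictly above the level, then passing to $U_k$ and $y_k=T_k(x_k)$ and invoking the intermediate value theorem on disjoint intervals --- is exactly the argument of the paper. Where you genuinely diverge is in the choice of the ``good'' levels and in how the exceptional set is killed. The paper works with levels $y$ that are not local extremum values of $U$ (a condition weaker than yours, since it asks only that every preimage have neighbours on both sides of $y$, not differentiability); the set of local extremum values of a continuous function is \emph{countable} (each is $\max_{J_{q,r}}U$ or $\min_{J_{q,r}}U$ over rational intervals), so the exceptional set in $x$ is a countable union of sets $T^{-1}(t)\cap\{T'>0\}$, each null because $T'=0$ a.e.\ on the level set $T^{-1}(t)$, which is an interval. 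You instead exclude the levels in $U(\{U'=0\}\cup\{U\text{ not differentiable}\})$, which is merely a \emph{null} (possibly uncountable) set of levels, and must therefore pay for the transfer back to the $x$-variable with heavier machinery: the Luzin (N) property and the zero-derivative image lemma for the absolutely continuous $U$, and the one-dimensional area/Banach-indicatrix formula $\int_B T'=\int_{\R}\#(B\cap T^{-1}(y))\dx y$ for the monotone absolutely continuous $T$ (note that for the direction you actually need --- $\lambda(T(B))=0$ implies $\int_B T'=0$ --- the multiplicity version of the formula suffices, so the essential injectivity of $T$ on $\{T'\neq0\}$ that you worry about is not really needed). Both routes are sound; the paper's is more elementary because countability of the bad levels reduces everything to the single observation about level sets of $T$, while yours is arguably more robust in that it does not rely on the topological accident that extremum values are countable, at the price of the area formula for merely $W^{1,1}$ maps.
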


\begin{proof} Let us first show that this inequality holds if $y=T(x)$ is not a local extremum of $U$; thus, if we take $x' \in I$ and $\delta>0$ with $U(x')=y$, there exist $x^-$, $x^+ \in \:]x'-\delta,x'+\delta[$ with $U(x^-) < U(x') < U(x^+)$. Let $(x_k)_k$ be a sequence of $I$ converging to $ x$, and $y_k = T_k(x_k)$. Thanks to the uniform convergence of $(T_k)_k$ to $T$, $y_k \to y$. Let $p$ be a finite integer such that $p \leq n(x)$; we will show that we can find $p$ distinct points having $T_k(x_k)$ as image by $U$, for $k$ large enough (depending on $p$).
\par Let $z_1 < ... < z_p \in U^{-1}(y)$, and $\delta < \min_j (z_{j+1}-z_j)$. By the assumption on $y$, we can find $\ve>0$ and some points $z_j^+$ and $z_j^-$ in each interval $]z_j-\delta, z_j+\delta[$ such that $U(z_j^-) +\ve< U(z_j) < U(z_j^+)-\ve$.
\par Since $U_k \to U$ pointwisely on $I$, the sequence  $(U_k(z_j^-))_k$ (resp.  $(U_k(z_j^+))_k$, $(U_k(z_j))_k$) converges to $U(z_j^-)$ (resp. $U(z_j^+)$, $U(z_j)$). There exists $k_0 \in \N$ such that, for $k \geq k_0$, we have $U_k(z_j^-) \leq U(z_j^-)+\eps/2 \leq y-\eps/2$ and $U_k(z_j^+) \geq U(z_j^+) - \eps/2 \geq y+\eps/2$; moreover, since $y_k \to y$, we can assume that,  for $k \geq k_0$, $ y-\eps/2 \leq y_k \leq y+\eps/2$ ; combining these two points, we have
$$ \mbox{ for all }\, k \geq k_0, \quad  U_k(z_j^-) \leq y_k \leq U_k(z_j^+); $$
then by the intermediate value theorem, since $U_k$ is continuous, for any $j$, there exist $z_j^k$ between $z_j^+$ and $z_j^-$, such that $U_k(z_j^k) = y_k$. The points $z_j^k$, $1 \leq j \leq p$, are distinct, since they belong to disjoint intervals $]z_j-\delta, z_j+\delta[$. Hence, $n_k(x_k) \geq p$ for $k \geq k_0$. The proof is complete if $T(x)$ is not a local extremum of $U$.
\par The last step consists in showing that the set $A$ of points $x$ such that $T(x)$ is a local extremum for $U$ and verifying furthermore $T'(x) > 0$ is negligible for the Lebesgue measure. Indeed, $y$ is a local maximum of $U$ if, and only if, $y = \max\limits_{J_{q,r}} U$ with $q \in \Q \cap I$, $r \in \Q_+^*$ and $J_{q,r} = ]q-r,q+r[ \cap I$; therefore,

$$ A = \bigcup\limits_{q \in \Q\cap I, r \in \Q_+^*} \left\{ T^{-1} (\max\limits_{J_{q,r}} U) \cap \{T'>0 \},  T^{-1}(\min\limits_{J_{q,r}} U) \cap \{T'>0 \} \right\}$$
This equality proves that $A$ is mesurable, and it is enough to prove that for each level $t$ we have $\lambda \left( T^{-1}(t) \cap \{T'>0 \} \right) = 0 $; this is true since $T'=0$ a.e. on any level set of $T$ (which is an interval).
\end{proof}

\paragraph{Conclusion by semi-continuity}
 We denote for $x \in I$ and $k,j \in \N$:
\begin{gather*}
n_k^j(x) = \min \left( j, \inf\limits_{ y \in I} \{ j|x-y|+n_k(y) \} \right) ;\quad
 h_j(x) = \lim\limits_{k \to +\infty} n_k^j(x) ;
  \end{gather*}
($h_j$ exists since the family $(n_k^j)_k$ is, for each $j$, uniformly
bounded and equi-Lipschitz, thus we can assume that it admits a
uniform limit up to subsequences). Let us notice that for any $j$,
$n_k^j \leq n_k$ (take $x=y$ in the definition of $n_k^j$). Moreover,
by Lemma \ref{nk}, $m \geq n$ on $I$. Let us show the following lemma:

\begin{lem} For any $j \in \N$, we have $h_j \geq m_j$ on $I$, where $m_j$ is defined as
  $$m_j(x) = \min\left( j, \inf\limits_{y \in I} \{j|x-y|+m(y) \} \right).$$
   \end{lem}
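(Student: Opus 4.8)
The plan is to read $n_k^j$ and $m_j$ as capped Moreau--Yosida (inf-convolution) regularizations of $n_k$ and $m$, and to exploit the defining property of the $\Gamma$-$\liminf$ $m$ together with the compactness of $I=[a,b]$. I would fix $j\in\N$ and a point $x\in I$ and prove $h_j(x)\geq m_j(x)$ pointwise. Since every $n_k^j$ is bounded above by $j$, so is its limit $h_j$; likewise $m_j(x)\leq j$ by construction. Hence if $h_j(x)=j$ the inequality is immediate (as $m_j(x)\leq j=h_j(x)$), and the only remaining case is $h_j(x)<j$.

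In that case, along the subsequence for which $n_k^j\to h_j$, for all $k$ large enough one has $n_k^j(x)<j$, so the truncation at level $j$ is inactive and $n_k^j(x)=\inf_{y\in I}\{j|x-y|+n_k(y)\}$. I would then pick near-minimizers $y_k\in I$ with $j|x-y_k|+n_k(y_k)\leq \inf_{y\in I}\{j|x-y|+n_k(y)\}+1/k$. Because $I$ is compact, up to a further subsequence $y_k\to y_*\in I$, and the convergence $n_k^j(x)\to h_j(x)$ is preserved along it.

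The key step is the lower bound $\liminf_k n_k(y_k)\geq m(y_*)$: by definition $m(y_*)=\inf\{\liminf_k n_k(z_k):z_k\to y_*\}$ is a lower bound for $\liminf_k n_k(z_k)$ taken over \emph{all} sequences $z_k\to y_*$, and applying this to the particular sequence $(y_k)$ yields the claim. Combining it with the continuity of $y\mapsto j|x-y|$ (so that $\liminf$ of a sum with one convergent summand splits) gives
$$ h_j(x)=\lim_k n_k^j(x)\geq \liminf_k\big(j|x-y_k|+n_k(y_k)\big)\geq j|x-y_*|+m(y_*)\geq \inf_{y\in I}\{j|x-y|+m(y)\}\geq m_j(x), $$
which is exactly the desired inequality.

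The main obstacle, and really the only delicate point, is the interplay between the infimum defining the regularization and the $\Gamma$-$\liminf$: one must guarantee that the near-minimizers converge (which is where compactness of $I$ enters) and that the $\Gamma$-$\liminf$ inequality is invoked along that convergent sequence, not along the original index. The truncation at level $j$ is disposed of cleanly by the preliminary case distinction, and the remaining limit manipulations are routine.
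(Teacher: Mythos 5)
Your proof is correct and follows essentially the same route as the paper's: choose near-minimizers $y_k$ in the inf-convolution, extract a convergent subsequence by compactness of $I$, and apply the defining infimum property of the $\Gamma$-$\liminf$ $m$ along the sequence $(y_k)$. The only cosmetic difference is that you dispose of the cap at level $j$ by a preliminary case distinction, whereas the paper carries the $\min(j,\cdot)$ through the limit; both handle it correctly.
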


\begin{proof} Set $n_k^j = \min(j,\tilde{n}_k^j)$ and $m_j = \min(j,\tilde{m}_j)$, where
$$ \tilde{n}_k^j(x) = \inf\limits_{y \in I} \{ j|x-y|+n_k(y) \} \text{ and } \tilde{m}_j(x) = \inf\limits_{y \in I} \{j|x-y|+m(y) \}.$$
By definition, there is a sequence $(y_k)_k$ such that $\tilde{n}_k^j(x) \leq j|x-y_k|+n_k(y_k) \leq \tilde{n}_k^j(x)+1/k$ for any $k$; taking the minimum with $j$, we obtain
$$ n_k^j(x) \leq \min\left(j, j|x-y_k|+n_k(y_k)\right) \leq n_k^j(x)+\frac{1}{k}.$$
We may assume by compactness that $y_k \to y \in I$, and, by definition of $h_j$, we have $\min(j,j|x-y_k|+n_k(y_k)) \cvge{k} h_j(x)$. Moreover, by definition of $m$, we have $\liminf_{k \to +\infty} n_k(y_k) \geq m(y)$, which gives
$$ \min\left(j,j|x-y|+m(y)\right) \leq h_j(x) $$
for $y=\lim_k y_k \in I$ ; since $m_j(x)$ is the infimum over $y$ of the left-hand side, we obtain $h_j(x) \geq m_j(x)$. 
\end{proof}

The functions $m_j$ that we just introduced are the usual Lipschitz ``regularization'' of the l.s.c. function $m$, and we will use (without proving) the following standard lemma.

\begin{lem} The sequence of functions $(m_j)_{j \in \N}$ is non-decreasing, and has $m$ for pointwise limit. \end{lem}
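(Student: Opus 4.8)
The plan is to recognize $m_j$ as the standard Lipschitz (inf-convolution) regularization of the lower semicontinuous function $m$, and to prove the two assertions by elementary monotonicity together with a lower semicontinuity argument; recall that $m\geq 1$ (as a $\Gamma$-liminf of functions $n_k\geq 1$), so $m$ is bounded below on $I$. Write $\tilde m_j(x)=\inf_{y\in I}\{j|x-y|+m(y)\}$, so that $m_j=\min(j,\tilde m_j)$. For monotonicity, observe that for each fixed $y$ the map $j\mapsto j|x-y|+m(y)$ is non-decreasing, so $\tilde m_j\leq \tilde m_{j+1}$; since $\min(\cdot,\cdot)$ is non-decreasing in each argument and $j<j+1$, this gives $m_j\leq m_{j+1}$. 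In particular the pointwise limit of $(m_j)_j$ exists in $[1,+\infty]$ at every $x$. Taking $y=x$ in the infimum yields $\tilde m_j(x)\leq m(x)$, hence $m_j(x)\leq m(x)$ for all $j$ and $\lim_j m_j(x)\leq m(x)$; it remains to prove the reverse inequality.

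The one delicate point, and the only place where lower semicontinuity is used, is the lower bound. Fix $x\in I$ with $m(x)<+\infty$ and choose almost-minimizers $y_j\in I$ satisfying $j|x-y_j|+m(y_j)\leq \tilde m_j(x)+1/j$. Combining $\tilde m_j(x)\leq m(x)$ with $m\geq 1$ gives $j|x-y_j|\leq m(x)+1/j-1$, so that $y_j\to x$. Dropping the nonnegative penalization, $\tilde m_j(x)\geq m(y_j)-1/j$, and passing to the lower limit together with $\liminf_j m(y_j)\geq m(x)$ (lower semicontinuity of $m$ at $x$) yields $\liminf_j\tilde m_j(x)\geq m(x)$. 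Since moreover $\tilde m_j(x)\leq m(x)<+\infty$ forces $m_j(x)=\tilde m_j(x)$ as soon as $j>m(x)$, we conclude $\lim_j m_j(x)=m(x)$ in the finite case.

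When $m(x)=+\infty$ I would argue by contradiction: if $\tilde m_j(x)\leq M$ held along a subsequence, the same estimate would force $y_j\to x$ while keeping $m(y_j)$ bounded, contradicting $\liminf_j m(y_j)\geq m(x)=+\infty$; hence $\tilde m_j(x)\to+\infty$, and as $j\to+\infty$ too we get $m_j(x)=\min(j,\tilde m_j(x))\to+\infty=m(x)$. The main obstacle is thus concentrated entirely in the lower bound, specifically in guaranteeing that the almost-minimizers $y_j$ cluster at $x$ before lower semicontinuity can be invoked: this is precisely what the diverging penalization $j|x-y|$ (for $y\neq x$) and the lower bound $m\geq 1$ secure, while the outer truncation $\min(j,\cdot)$ is harmless but must be checked to be eventually inactive (finite case) or to diverge (infinite case).
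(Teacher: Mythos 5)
Your proof is correct and complete. Note that the paper itself gives no proof of this lemma --- it is explicitly invoked ``without proving'' as the standard fact about Lipschitz (inf-convolution, or Yosida-type) regularizations of a lower semicontinuous function --- so there is no argument in the paper to compare against; yours is exactly the standard one, with the two key points handled properly: the penalization $j|x-y|$ together with the uniform lower bound on $m$ forces the almost-minimizers $y_j$ to converge to $x$ before lower semicontinuity is applied, and the outer truncation $\min(j,\cdot)$ is checked to be eventually inactive in the finite case and to diverge in the infinite case. (A minor remark: the bound $m\geq 1$ is convenient but not essential --- any uniform lower bound on $m$, e.g.\ $m\geq 0$, already gives $j|x-y_j|\leq m(x)+1/j-\inf m$ and hence $y_j\to x$.)
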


We now return to our main result:
\begin{theo}
Let $f$ be convex and non-decreasing, $U\in W^{1,1}$ such that $\int f(|U'|)<+\infty$ and $T$ monotone non-decreasing such that $T_\#\lambda=U_\#\lambda$. Then the inequality \eqref{ineq} holds.
\end{theo}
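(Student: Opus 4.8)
The plan is to pass to the limit, along the approximating sequence $(U_k)$ built above, in the piecewise-affine inequality $\int_I f(|U_k'|)\geq\int_I f(n_k T_k')$, and to combine this with the semicontinuity estimates already prepared (the lemmas on $m$, $h_j$ and $m_j$). I first treat the superlinear case, so that the Remark applies and $(T_k')$ is equi-integrable, and I deduce the general convex non-decreasing case at the very end by a perturbation. Concretely, I would fix one subsequence along which all the convergences of the Remark hold simultaneously, together with $n_k^j\to h_j$ uniformly for every $j$ (a diagonal extraction, legitimate since each family $(n_k^j)_k$ is equi-Lipschitz and bounded by $j$). Along it the left-hand side converges, $\int_I f(|U_k'|)\to\int_I f(|U'|)$, thanks to the $L^1$ convergence $f\circ|U_k'|\to f\circ|U'|$ from the approximation lemma.

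The heart of the argument is the lower bound for the right-hand side. Since $n_k^j\leq n_k$ while $f$ is non-decreasing and $T_k'\geq 0$, for each $j$ we have $\int_I f(n_k T_k')\geq\int_I f(n_k^j T_k')$, so it suffices to bound the regularized term from below. Passing to the limit in $k$ for fixed $j$: because $n_k^j\to h_j$ uniformly with $0\leq n_k^j\leq j$ and $(T_k')$ is equi-integrable with $T_k'\weakto T'$ in $L^1(I)$, the products converge weakly, $n_k^j T_k'\weakto h_j T'$ in $L^1(I)$. Indeed, writing $n_k^j T_k'=(n_k^j-h_j)T_k'+h_j T_k'$, the first term tends to $0$ in $L^1$ norm (its norm is at most $\|n_k^j-h_j\|_\infty\|T_k'\|_{L^1}$), and the second converges weakly since multiplication by the fixed function $h_j\in L^\infty$ preserves weak $L^1$ convergence. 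The classical weak-$L^1$ lower semicontinuity of the convex, non-negative functional $v\mapsto\int_I f(v)$ then yields
$$\liminf_{k\to\infty}\int_I f(n_k^j T_k')\geq\int_I f(h_j T').$$
Chaining the inequalities gives $\int_I f(|U'|)\geq\int_I f(h_j T')$ for every $j$.

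It then remains to let $j\to\infty$ and return to $n$. Using the lemma $h_j\geq m_j$ (again $f$ non-decreasing and $T'\geq 0$) gives $\int_I f(|U'|)\geq\int_I f(m_j T')$; since $m_j\uparrow m$ pointwise and $f$ is non-decreasing and lower semicontinuous, $f(m_j T')\uparrow f(m T')$, so monotone convergence yields $\int_I f(|U'|)\geq\int_I f(m T')$. Finally Lemma \ref{nk} gives $m\geq n$ a.e.\ on $\{T'>0\}$, while on $\{T'=0\}$ both $mT'$ and $nT'$ vanish by the stated convention; hence $f(mT')\geq f(nT')$ a.e.\ and $\int_I f(|U'|)\geq\int_I f(n T')$, which also establishes $\int_I f(nT')<+\infty$. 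This proves the theorem for superlinear $f$. For a general convex non-decreasing $f$, I would apply the result to the superlinear function $t\mapsto f(t)+\eps t$, obtaining $\int_I f(|U'|)+\eps\int_I|U'|\geq\int_I f(nT')$ (all integrands being non-negative), and then let $\eps\to 0$, using $\int_I|U'|<+\infty$.

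The step I expect to be the main obstacle is the weak lower semicontinuity passage at fixed $j$: one cannot work with $n_k$ directly because it is unbounded and only $\Gamma$-convergent, so the truncation–regularization $n_k^j$ is essential, providing both the uniform convergence $n_k^j\to h_j$ and the weak convergence of the products $n_k^j T_k'\weakto h_j T'$. The equi-integrability of $(T_k')$ (a consequence of superlinearity via the Remark) is precisely what makes the product argument run. The subsequent recovery of $n$ from $m$ rests entirely on Lemma \ref{nk}, whose only delicate point, the negligibility of the local-extrema level set, has already been settled there.
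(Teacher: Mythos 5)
Your superlinear case is correct and follows the paper's architecture (approximate, apply the piecewise-affine inequality, pass to the limit via $n_k^j$, $h_j$, $m_j$, $m$, then invoke Lemma \ref{nk}), but with a genuinely different handling of the semicontinuity step. The paper introduces a margin $\delta>0$, uses $n_k^j\geq h_j-\delta\geq m_j-\delta$ for $k$ large to freeze the coefficient, and then applies weak-$W^{1,1}$ lower semicontinuity of $T\mapsto\int f((m_j-\delta)T')$, removing $\delta$ at the end by monotone convergence. You instead prove directly that $n_k^j T_k'\weakto h_j T'$ in $L^1$ (splitting off $(n_k^j-h_j)T_k'$, which vanishes in norm by uniform convergence of $n_k^j$ and boundedness of $\|T_k'\|_{L^1}$, and using that multiplication by the fixed bounded function $h_j$ preserves weak $L^1$ convergence), and then apply weak-$L^1$ lower semicontinuity of the convex functional $v\mapsto\int f(v)$. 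This is a legitimate and arguably cleaner route: it dispenses with the auxiliary parameter $\delta$ entirely, at the price of invoking the equi-integrability of $(T_k')$ a bit more explicitly. Everything up to and including $\int f(|U'|)\geq\int f(mT')\geq\int f(nT')$ is sound.

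There is, however, a genuine gap in your final reduction from general convex non-decreasing $f$ to the superlinear case. You propose to apply the superlinear result to $t\mapsto f(t)+\eps t$; but if $f$ has linear growth, then $(f(t)+\eps t)/t\to L+\eps<+\infty$, so this perturbed function is \emph{not} superlinear, and the superlinear case cannot be invoked. Superlinearity is not cosmetic here: it is exactly what gives (via the Remark) the equi-integrability of $(T_k')$, hence the weak $W^{1,1}$ compactness of $(T_k)$ and the weak convergence $T_k'\weakto T'$ on which your product argument rests. The repair is the one the paper uses: since $|U'|\in L^1(I)$, the de la Vall\'ee-Poussin criterion provides a positive, convex, increasing and genuinely superlinear $\tilde f$ with $\int\tilde f(|U'|)<+\infty$; then $f+\eps\tilde f$ is superlinear, the theorem applies to it, and one concludes by letting $\eps\to 0$ in $\int f(|U'|)+\eps\int\tilde f(|U'|)\geq\int f(nT')$.
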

\begin{proof}
First of all suppose that $f$ is superlinerar. We use the approximation defined in this section. Section 2 proves that, for any $k$ :
$$ \int_I f(|U_k'(x)|) \dx x \geq \int_I f(n_k(x)T'_k(x)) \dx x $$
and thanks to the non-decreasing behavior of $f$ and to the remarks about $n_k$, $n_k^j$ and $h_j$, we have the following inequalities, which are true for $k\geq k_0$ for every $\delta>0$ and $j$ ($k_0=k_0(\delta,j)$):
\begin{equation}  \int_I f(n_kT_k') \geq \int_I f(n_k^jT_k') \geq \int_I f((h_j-\delta)T_k') \geq \int_I f((m_j-\delta) T_k'). \label{ineg} \end{equation}
For some fixed $\delta >0$ and $j \in \N$, the functional
$$ T \in W^{1,1} \mapsto \int_I f((m_j(x)-\delta) T'(x)) \dx x  $$
is lower semi-continuous with respect to the weak convergence in $W^{1,1}$ (see \cite{daco}). Thus, taking the limit $k \to +\infty$ in \eqref{ineg} gives
$$ \liminf_{k \to +\infty} \int_I f(n_k T_k') \geq \int_I f((m_j-\delta) T') ;$$
and by monotone convergence, taking the limit $j \to +\infty$ and $\delta \to 0$ in the right-hand side gives
$$ \int_I f((m_j-\delta) T') \cvge{j} \int_I f((m-\delta)T') \xrightarrow[\delta \to 0]{} \int_I f(mT'). $$
Since $m \geq n$ a.e. on the set $\{T' \neq 0 \}$ and $f$ is non-decreasing, the proof is complete for $f$ superlinear.

If $f$ has linear growth, it is sufficient to select a positive, convex, increasing and superlinear function $\tilde f$ such that $\int \tilde f(|U'|)<+\infty$; if we fix $\ve > 0$, $f+\ve\tilde f$ is superlinear and non-decreasing, thus
$$ \mbox{ for all $\ve > 0$ we have }\; \int f(|U'|) + \ve \int \tilde f(|U'|) \geq \int f(nT') + \ve \int \tilde f(nT') \geq \int f(nT') $$
and passing to the limit as $\ve\to 0$ gives the result. 
\end{proof}

\end{document}